\newtheorem{theorem}{Theorem}
\newtheorem*{theorem*}{Theorem}
\newtheorem*{question*}{Question}
\newtheorem*{conjecture*}{Conjecture}
\newtheorem*{convention*}{Convention}
\newtheorem{claim}{Claim}
\newtheorem*{claim*}{Claim}
\newtheorem*{assumption*}{Assumption}
\newtheorem{corollary}[theorem]{Corollary}
\newtheorem*{corollary*}{Corollary}
\newtheorem{proposition}[theorem]{Proposition}
\newtheorem*{proposition*}{Proposition}
\newtheorem*{lemma*}{Lemma}
\newtheorem{fact}[theorem]{Fact}
\newtheorem*{fact*}{Fact}
\newtheorem{theoremA}{Theorem}
\theoremstyle{definition}
\newtheorem{definition}[theorem]{Definition}
\newtheorem*{definition*}{Definition}
\newtheorem{example}[theorem]{Example}
\newtheorem*{example*}{Example}
\newtheorem{remark}[theorem]{Remark}
\newtheorem*{remark*}{Remark}
\numberwithin{theorem}{section}
\numberwithin{equation}{section}
\DeclareMathOperator{\acl}{acl}
\DeclareMathOperator{\dcl}{dcl}
\DeclareMathOperator{\trdeg}{trdeg}
\DeclareMathOperator{\ring}{ring}
\newcommand{\A}{\mathbb{A}}
\newcommand{\N}{\mathbb{N}}
\newcommand{\PP}{\mathbb{P}}
\newcommand{\Q}{\mathbb{Q}}
\newcommand{\cE}{\mathcal E}
\newcommand{\cL}{\mathcal L}
\newcommand{\cO}{\mathcal O}
\newcommand{\fm}{\mathfrak{m}}
\newcommand{\Ld}{\cL^\delta}
\newcommand{\Td}{T^\delta}
\newcommand{\TdG}{T^\delta_{g}}
\renewcommand{\succeq}{\succcurlyeq}
\renewcommand{\geq}{\geqslant}
\renewcommand{\leq}{\leqslant}
\renewcommand{\phi}{\varphi}
\author{Elliot Kaplan}
\email{ekaplan@mpim-bonn.mpg.de}
\email{elliot.kaplan@umons.ac.be}
\address{Max Planck Institute for Mathematics, Bonn, Germany}
\address{Mathematics Department, University of Mons, Mons, Belgium}
\author{Christoph Kesting}
\email{kestingc@mcmaster.ca}
\address{Department of Mathematics and Statistics, McMaster University, Hamilton, Ontario, Canada}
\title{Generic derivations, differential largeness, and NTP$_2$}
\date{\today}
\begin{document}

\begin{abstract}
We compare Fornasiero and Terzo's framework of generic derivations on algebraically bounded structures with  Le\'on S\'anchez and Tressl's differentially large fields. We show in the case of a single derivation that genericity and differential largeness coincide for \'ez-fields, as introduced by Walsberg and Ye. We also show that an NTP$_2$ algebraically bounded structure remains NTP$_2$ after expanding by a generic derivation.
\end{abstract}
\maketitle

\section{Introduction}
In this note, $\cL$ is a language extending $\cL_{\ring}=\{0,1,+,\cdot\}$, and $K$ is an $\cL$-structure expanding a field of characteristic zero. We let $n,m,k,r$ range over $\N = \{0,1,2,\ldots\}$. 

The last few years have seen new approaches to the model theory of ``tame'' differential fields, often under the guiding principle: ``The model theory of a differential field is largely determined by the model theory of the underlying field, so long as the derivation is generic.'' \emph{Genericity} roughly means that any system of differential equations and inequalities has a solution, so long as the existence of a solution is consistent with the theory of the underlying field.

One recent approach is due to Fornasiero and Terzo~\cite{FT25}, who axiomatized what it means for a derivation on an algebraically bounded expansion of a field to be \emph{generic}. The structure $K$ is \textbf{algebraically bounded} if for all elementary extensions $K^* \succeq_{\cL}K$, all $B \subseteq K^*$, and all $a \in K^*$, we have
\[
a \in \acl_\cL(K\cup B) \iff \trdeg(a|K(B)) = 0.
\]
Many tame classes of fields are known to be algebraically bounded, including real closed fields, algebraically closed fields, and henselian fields of characteristic zero (in the language of rings, though one can also allow a predicate for the valuation ring in the last case). Algebraic boundedness was introduced by van den Dries~\cite{vdD89}, and the equivalence of van den Dries' definition with the one given here is~\cite[Lemma 2.12]{JY23}.

Let $\dim$ denote algebraic dimension on subsets of cartesian powers of $K$, so $\dim X$ is the transcendence degree of the function field $K(X)$ over $K$ for $X \subseteq K^n$. Equivalently, $\dim X = \max_{a\in X^*}\trdeg(a|K)$, where $(K^*,X^*)$ is a $|K|^+$-saturated elementary extension of $(K,X)$; see~\cite[Section 2]{vdD89}. 
If $K$ is algebraically bounded, then $\dim$ is a definable dimension on $K$, meaning that for any definable family $(X_a)_{a \in A}$ and any $r$, the set of $a\in A$ for which $\dim(X_a) = r$ is definable. 

Let $\delta$ be a derivation on $K$. Fornasiero and Terzo define genericity as follows:

\begin{definition}[Genericity]
 The derivation $\delta$ is \textbf{generic} if for all $\cL(K)$-definable $X \subseteq K^{1+r}$, if the projection of $X$ onto the first $r$ coordinates has dimension $r$, then there is $a \in K$ with $(a,\delta a,\ldots,\delta^ra) \in X$.
\end{definition}
When $K$ is algebraically bounded, genericity is first-order axiomatizable. 
Fornasiero and Terzo go on to show that if $K$ is algebraically bounded and $\delta$ is generic, then several model-theoretic properties of $K$ transfer to $(K,\delta)$, including quantifier elimination and model completeness~\cite[Corollaries 3.16 and 3.17]{FT25}.

Another recent research program, initiated by  Le\'on S\'anchez and Tressl, is the study of \emph{differentially large fields}~\cite{LST24}. These fields serve as a differential analog of the \emph{large} fields of Pop; see~\cite{Po14}. Unlike the setting of algebraically bounded structures with generic derivations, differential largeness is defined only for \emph{pure} differential fields (that is, when $\cL = \cL_{\ring}$).

\begin{definition}[Differential largeness]
The underlying differential field $(K,\delta)$ is \textbf{differentially large} if 
\begin{enumerate}
\item  $K$ is large as a field, and 
\item for every differential field extension $(L,\delta) \supseteq (K,\delta)$, if $K$ is existentially closed in $L$ as a field, then $(K,\delta)$ is existentially closed in $(L,\delta)$ as a differential field. 
\end{enumerate}
\end{definition}

What is the relationship between these two notions? We note that there are large, non-algebraically bounded fields~\cite[Example 10]{Fe10}, as well as algebraically bounded, non-large fields~\cite[Example 4.30]{JY25}. Using the characterization of differential largeness provided in~\cite{LST25}, it is not difficult to show that if the underlying field $K$ is large and $\delta$ is generic, then the underlying differential field $(K,\delta)$ is differentially large; see Corollary~\ref{cor:oneway}. As for the converse, an obvious obstruction occurs in the case that there are $\cL$-definable subsets of $K$ that are not definable in the $\cL_{\ring}$-language, as the axiom of  differential largeness can't assert anything about these sets. Here is a concrete example:
\begin{example}
Let $(K,\delta)$ be a differentially closed field and let $C\coloneqq \ker(\delta)$ be the constant field of $K$. Let $t \in K$ be transcendental over $C$ and consider the subfield $C(t)$ of $K$. Let $\cO_t\subseteq C(t)$ be the $t$-adic valuation ring on $C(t)$, and let $\cO \subseteq K$ be a valuation ring lying over $\cO_t$ (we can even find $\cO$ with residue field isomorphic to $C$; see~\cite[Chapter V, Theorem 9]{Ja64}). Then $(K,\cO)$ is an algebraically closed nontrivially valued field. Let $\cL = \cL_{\ring}\cup\{\cO\}$ be the language of valued fields, so  $(K,\cO)$ is algebraically bounded as an $\cL$-structure. Let $\fm$ be the maximal ideal of $\cO$. Then $\dim(\fm) = 1$, but there is no $a\in \fm$ with $\delta a = 0$, since $C \subseteq \cO^\times = \cO \setminus \fm$. Thus, $\delta$ is not generic.
\end{example}
 
In light of this example, a converse only becomes plausible if we assume that $K$ is a large field with no additional structure. 

\begin{question*}
Suppose that $\cL = \cL_{\ring}$. If $(K,\delta)$ is differentially large, then is $\delta$ generic?
\end{question*}

It seems difficult to answer this question without some understanding of the definable sets in $K$. In Proposition~\ref{prop:axioms} below, we give a ``topological'' axiomatization of differential largeness in terms of the \'etale open topology of Johnson, Tran, Walsberg, and Ye~\cite{JTWY24}. Widawski gives a similar axiomatization of differential largeness in his thesis~\cite{Wi24} (see Remark~\ref{rem:widawski}), and we  thank the referee for bringing this to our attention.

In~\cite{WY23}, Walsberg and Ye isolated the class of \textbf{\'ez-fields}---large, algebraically bounded fields that are ``topologically tame'' with respect to the \'etale open topology. For these fields, we have a good enough understanding of definable sets to answer the question positively.

\begin{theoremA}[Corollaries~\ref{cor:oneway} and~\ref{cor:conv}]\label{thma}
Suppose that $\cL = \cL_{\ring}$ and that $K$ is an \'ez-field. Then $\delta$ is generic if and only if $(K,\delta)$ is differentially large.
\end{theoremA}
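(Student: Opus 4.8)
The plan is to prove the two implications separately, as the two cited corollaries, using throughout that an \'ez-field is in particular large, so the standing hypotheses of Corollary~\ref{cor:oneway} are automatic. The forward implication---if $\delta$ is generic then $(K,\delta)$ is differentially large---should not need the full \'ez hypothesis and will follow from the characterization of differential largeness in~\cite{LST25}. The reverse implication---if $(K,\delta)$ is differentially large then $\delta$ is generic---is the substantive half, and it is here that I expect to spend the \'ez hypothesis, through the topological axiomatization of Proposition~\ref{prop:axioms}.

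For the forward direction I would unwind the criterion of~\cite{LST25}: differential largeness is equivalent to $K$ being large together with a family of ``prolongation'' conditions, each asserting that a definable set attached to a $K$-variety and its prolongation is nonempty. Each such condition has the form ``there is $a\in K$ with $(a,\delta a,\dots,\delta^r a)\in X$'' for a definable $X\subseteq K^{1+r}$ whose projection $\pi_r(X)$ to the first $r$ coordinates is Zariski-dense, hence of dimension $r$. This is precisely an instance of the conclusion in the definition of genericity, so a generic $\delta$ supplies the required point, while largeness of $K$ gives the remaining clause; this is Corollary~\ref{cor:oneway}. Equivalently one may route the argument through Proposition~\ref{prop:axioms}, noting that full-dimensional sets have nonempty \'etale-open interior, so the topological hypothesis there is weaker than full dimension and is met outright.

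For the reverse direction I would argue by building a differential field extension and descending. Fix a definable $X\subseteq K^{1+r}$ with $\dim\pi_r(X)=r$; I must produce $a\in K$ with $(a,\delta a,\dots,\delta^r a)\in X$. The first move, and a key use of the \'ez hypothesis, is to replace $X$ by an \emph{existentially} $\cL_{\ring}(K)$-definable subset with the same property: in an \'ez-field a full-dimensional definable set contains a basic \'etale-open piece, and basic \'etale-open sets are images of morphisms, hence existentially definable, so I may assume that ``$\exists x\,(x,\delta x,\dots,\delta^r x)\in X$'' is an existential sentence over $K$ in the language of differential rings. Now pass to a sufficiently saturated elementary extension and pick $(b_0,\dots,b_r)\in X$ with $b_0,\dots,b_{r-1}$ algebraically independent over $K$, using that $\pi_r(X)$ is Zariski-dense in $\A^r$. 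Algebraic independence lets me extend $\delta$ to $K(b_0,\dots,b_{r-1})$ by $\delta b_i=b_{i+1}$ for $i<r$, and then to $L\coloneqq K(b_0,\dots,b_r)$ (the value $\delta b_r$ being forced if $b_r$ is algebraic over $K(b_0,\dots,b_{r-1})$ and free otherwise), producing a differential extension $(L,\delta_L)\supseteq(K,\delta)$ in which $b_0$ witnesses the displayed existential sentence.

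It then remains to certify that $K$ is existentially closed in $L$ \emph{as a field}; granting this, differential largeness upgrades it to existential closedness as a differential field, and the existential sentence descends to give the desired $a\in K$, completing Corollary~\ref{cor:conv}. To verify field-existential-closedness I would invoke largeness in the form: if $K$ is large and $V$ is an irreducible $K$-variety with a smooth $K$-rational point, then $K$ is existentially closed in $K(V)$, via the embedding of $K(V)$ into an iterated Laurent series field over $K$ obtained from the completed local ring at the smooth point. Taking $V$ to be the $K$-locus of $(b_0,\dots,b_r)$, the one thing left is a smooth $K$-rational point of $V$, and this is exactly what the \'ez hypothesis provides through Proposition~\ref{prop:axioms}: the \'etale-open interior of $\pi_r(X)$ is nonempty, and since basic \'etale-open neighborhoods arise from \'etale---hence smooth---morphisms, lifting such a point yields a smooth $K$-point on the component of $V$ dominating $\A^r$. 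I expect the main obstacle to be precisely this last matching: arranging that the smooth point extracted from the \'etale-open interior lands on the correct component of $V$ and is genuinely smooth there. Organizing this is the content of Proposition~\ref{prop:axioms}, so in practice I would establish that proposition first and then read off the reverse implication by combining it with the \'ez-field equivalence between full dimension and nonempty \'etale-open interior.
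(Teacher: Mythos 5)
Your proposal follows essentially the same route as the paper: both directions are channelled through the topological axiomatization of Proposition~\ref{prop:axioms}, with the forward implication obtained by verifying the criterion of Fact~\ref{fact:altdifflarge} (equivalently, condition~\eqref{L3}) from genericity together with Fact~\ref{fact:ez}, and the reverse implication obtained by shrinking $X$ to an existentially $\cL_{\ring}(K)$-definable piece, building a differential field extension realizing the existential statement, and descending via differential largeness. The one genuine divergence is how you certify that $K$ is existentially closed \emph{as a field} in the extension carrying the new derivation: the paper extends $\delta$ to an entire elementary extension $K^*\succeq K$ containing a $K$-generic point of the relevant smooth variety, so field-existential-closedness is automatic, whereas you extend $\delta$ only to the finitely generated subfield $L=K(b_0,\ldots,b_r)$ and invoke largeness in the form ``$K$ is existentially closed in the function field of an irreducible variety with a smooth $K$-point.'' That shortcut creates a small gap you should close: after shrinking $X$ to a basic \'etale-open piece $f(Y(K))$, the existential witness $y\in Y$ with $f(y)=(b_0,\dots,b_r)$ need not lie in $L$, so the sentence you want to descend is not yet satisfied in $(L,\delta_L)$; adjoining $y$ forces you to check that the locus of $(b,y)$ still has a smooth $K$-rational point, which is not immediate since only some components of $Y$ need carry $K$-points. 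The cleanest repair is exactly the paper's move of extending $\delta$ to all of $K^*$ (possible because the generic point $x$ of $V$ and the fibre condition $u\in(\tau_xV)(K^*)$ are the only constraints), and since you say you would in practice establish Proposition~\ref{prop:axioms} first and read off both corollaries, the intended argument is sound.
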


Theorem~\ref{thma} was known in the case that $K$ is large and model complete (possibly after adding constants), as an easy consequence of~\cite[Proposition 4.8]{LST24}. This in turn uses that differentially large fields can be axiomatized via Tressl's ``uniform companion''~\cite{Tr05}. These fields, along with many others (including henselian fields of characteristic zero and perfect Frobenius fields), are all \'ez~\cite[Theorem C]{WY23}.

In the second part of this note, we focus on model-theoretic transfer theorems for algebraically bounded structures with generic derivations. Suppose that $K$ is algebraically bounded and let $T$ be the $\cL$-theory of $K$. Let $\PP \coloneqq \dcl_{\cL}(\emptyset)$ and fix a derivation $\delta_\PP$ on $\PP$ (one could, for instance, take $\delta_\PP$ to be the zero derivation; when $\PP$ is algebraic over $\Q$, this is the only possibility). Let $\Ld = \cL \cup \{\delta\}$, let $\Td = T+ \text{``$\delta$ is a derivation extending $\delta_\PP$''}$, and let $\TdG = \Td + \text{``$\delta$ is generic''}$. We include a list of previously established transfer theorems below.

\begin{fact}\label{fact:transfer}\
\begin{enumerate}
\item\label{T1} $\TdG$ is consistent and complete.
\item\label{T2} For every $\Ld$-formula $\varphi(x)$ ($x$ is a tuple of variables), there is an $\cL$-formula $\psi(x_0,\ldots,x_r)$ such that
\[
\TdG \models \forall x(\varphi(x) \leftrightarrow \psi(x,\delta x,\ldots,\delta^rx)).
\]
\item\label{T3} If $T$ is model complete, then $\TdG$ is the model companion of $\Td$. 
\item\label{T4} If $T$ eliminates quantifiers, then so does $\TdG$.
\item\label{T5} If $T$ is stable, then so is $\TdG$.
\item\label{T6} If $T$ has NIP, then so does $\TdG$.
\item\label{T7} If $T$ is distal, then so is $\TdG$.
\item\label{T8} If $T$ is simple, then so is $\TdG$.
\item\label{T9} If $\TdG$ eliminates imaginaries, then it is rosy.
\item\label{T10} If $T$ has NSOP$_1$, then so does $\TdG$.
\end{enumerate}
\end{fact}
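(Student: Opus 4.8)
The plan is to treat this Fact as a compilation of known transfer results and to organize the proof around the single structural lemma that drives all of them, namely the quantifier reduction in item~(2). First I would record, following Fornasiero and Terzo, that when $K$ is algebraically bounded and $\delta$ is generic, every $\Ld$-formula $\varphi(x)$ is equivalent modulo $\TdG$ to $\psi(x,\delta x,\ldots,\delta^r x)$ for some $\cL$-formula $\psi$. Concretely, this comes from eliminating the derivation down to the underlying field along the jet (prolongation) map $j_r\colon a\mapsto(a,\delta a,\ldots,\delta^r a)$, together with the genericity axioms guaranteeing that every dimension-$r$ configuration in $K^{1+r}$ is realized by a jet. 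Once~(2) is in hand, items~(1), (3), and~(4) are essentially formal: completeness follows because the reduced $\cL$-formulas are interpreted in the complete theory $T$, and consistency from the existence of generic derivations; model completeness and quantifier elimination transfer because a formula $\psi$ that is existential (resp.\ quantifier-free) in $\cL$, precomposed with the quantifier-free jet map, is again existential (resp.\ quantifier-free) in $\Ld$.

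For the dividing-line transfers~(5)--(10), the unifying principle I would invoke is that the jet map turns a definable family in a model of $\TdG$ into a definable family in its $\cL$-reduct, which is a model of $T$. For each $\Ld$-formula $\varphi(x;y)$, fix via~(2) an $\cL$-formula $\psi$ with $\varphi(x;y)\leftrightarrow\psi(j_r x;j_s y)$. A combinatorial pattern witnessing instability, the independence property, non-distality, or the $2$-tree property for $\varphi$ then pulls back along the jet map to the same pattern for $\psi$ in the reduct; since $T$ forbids that pattern, so does $\TdG$. This handles the purely formula-combinatorial lines---stability~(5), NIP~(6), and distality~(7)---directly, and it is the template one would follow for NTP$_2$ as well.

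The subtle items are simplicity~(8), rosiness~(9), and NSOP$_1$~(10), because these are governed by a global independence notion (forking, thorn-forking, or Kim-independence) rather than by the complexity of a single formula, so the pullback-of-formulas argument alone does not suffice. Here I would instead build an independence relation on models of $\TdG$ out of the algebraic $\aclL$-independence available in $T$ applied to jets, verify that it satisfies the axioms of the appropriate Kim--Pillay-style characterization, and conclude that $\TdG$ lies on the correct side of the line. I expect the main obstacle to be exactly this verification: checking that the jet-coordinatized independence relation satisfies the full axiom list---in particular the independence theorem for simplicity, its Kim/weak version for NSOP$_1$, and local character---which is where the genericity of $\delta$ must be used essentially, rather than merely to eliminate the derivation. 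For the statement as a Fact, however, I would not reprove these from scratch but cite the relevant literature, flagging~(5)--(7) as immediate consequences of~(2) and attributing~(8)--(10) to the works in which the corresponding independence relations are constructed.
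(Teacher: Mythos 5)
Your proposal matches the paper's treatment. The statement is labelled a Fact precisely because it is a compilation of results from the literature: the paper offers no proof beyond citations (to Fornasiero--Terzo for \eqref{T1}--\eqref{T9} and to Le\'on S\'anchez--Mohamed for \eqref{T10}), accompanied by exactly the organizational sketch you give --- the sequence-based lines \eqref{T5}--\eqref{T7} via the quantifier reduction in \eqref{T2}, and the independence-governed lines \eqref{T8}--\eqref{T10} by transferring an independence relation --- so your approach is the intended one and is correct for the Fact as stated.

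One caution about an aside in your write-up: the claim that the pullback-of-patterns template handles NTP$_2$ ``as well'' is wrong, and seeing why is the entire point of the paper's Theorem~\ref{thm:NTP2}. The consistency half of a TP$_2$ array does pull back along the jet map, but the inconsistency half does not: knowing that no single element $x$ satisfies $\psi(\nabla^r x, a_{i,j}) \wedge \psi(\nabla^r x, a_{i,j'})$ does not tell you that no tuple $(x_0,\ldots,x_r)$ satisfies $\psi(x_0,\ldots,x_r,a_{i,j}) \wedge \psi(x_0,\ldots,x_r,a_{i,j'})$ --- the jet constraint $x_k = \delta^k x_0$ may be exactly what forces the inconsistency. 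This is why the paper's NTP$_2$ transfer requires the dimension and Zariski-closure argument and a passage to an $(m+1)$-TP$_2$ pattern rather than a direct transfer. The genuinely sequence-based properties in \eqref{T5}--\eqref{T7} are safe because their defining patterns involve only satisfaction by actual realizations, never inconsistency of instances.
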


The first nine parts of Fact~\ref{fact:transfer} were shown by Fornasiero and Terzo; see~\cite{FT25} for~\eqref{T1}--\eqref{T6} and~\cite{FT25b} for~\eqref{T7}--\eqref{T9}. Part~\eqref{T10}, as well as independent proofs of~\eqref{T5},~\eqref{T8}, and~\eqref{T9}, can be established using Le\'on S\'anchez and Mohamed's framework of \emph{derivation-like theories}~\cite{LSM25}. Transfers of neostability properties defined in terms of sequences, like~\eqref{T5}--\eqref{T7}, can be established quite easily using~\eqref{T2}; see~\cite[Proposition 7.1]{ACGZ22} and~\cite[Theorem 6.2]{FT25} for general criteria, as well as the proofs in~\cite{CP23,FK21}. For properties that can be defined in terms of independence relations like~\eqref{T8}--\eqref{T10}, one shows that an independence relation on $T$ can be used to define one on $\TdG$ satisfying the same properties; see~\cite{FT25,LSM25}. A third class of model-theoretic properties consists of \emph{tree properties}, which are defined by consistency-inconsistency patterns. As far as we are aware, the only transfer result for these types of properties without using independence relations is due to Point~\cite{Po18}, who shows that NTP$_2$ transfers for certain classes of topological fields with generic derivations~\cite{GP10}. We generalize this to all algebraically bounded fields. Our method relies on certain facts about NTP$_2$-theories, such as Chernikov's one-variable theorem, but is general enough to be applied to other tree properties, such as the \emph{antichain tree property}~\cite{AKL23}.

\begin{theoremA}[Theorems~\ref{thm:NTP2} and~\ref{thm:NATP}]\label{thmb}
If $T$ has NTP$_2$, then so does $\TdG$. If $T$ has NATP (that is, if $T$ doesn't have the antichain tree property), then so does $\TdG$.
\end{theoremA}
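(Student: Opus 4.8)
The plan is to prove both transfers simultaneously by exploiting the definable reduction provided by Fact~\ref{fact:transfer}\eqref{T2}. The core idea is that every $\Ld$-formula $\varphi(x)$ is, modulo $\TdG$, equivalent to an $\cL$-formula $\psi(x,\delta x,\ldots,\delta^r x)$ applied to the derivatives of its variables. So a configuration witnessing a tree property (be it TP$_2$ or the antichain tree property) in a model $(M,\delta)\models\TdG$ is indexed by a family of $\Ld$-parameters; by applying the derivation to each parameter and collecting $a,\delta a,\ldots,\delta^r a$ into a single longer tuple, I would translate an $\Ld$-inconsistency/consistency pattern into a pattern for the associated $\cL$-formula $\psi$ over the ``prolonged'' parameters. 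The goal is to show that if $\TdG$ had the relevant tree property, then $T$ would already have it, contradicting the hypothesis.

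First I would fix the combinatorial shape of the property. For NTP$_2$, by Chernikov's one-variable theorem (which I am invoking as an external fact) it suffices to rule out TP$_2$ witnessed by a formula in a single object variable $x$; thus I may assume $x$ is a single variable and work with the prolonged tuple $\hat{a}=(a,\delta a,\ldots,\delta^r a)\in M^{1+r}$. An array $(b_{i,j})$ of $\Ld$-parameters realizing the TP$_2$-pattern for $\varphi(x,y)$ then yields, via the $\psi$-translation, an array for the $\cL$-formula $\psi(\hat{x},\hat{y})$; but $\hat{x}$ now ranges over $M^{1+r}$ rather than a single variable, so I cannot directly feed this back into the one-variable theorem for $T$. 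The remedy is to observe that the prolongation $a\mapsto\hat a$ is a map whose image is an $\cL$-definable-up-to-dimension object governed by the genericity axiom, and that the tree-property patterns are preserved when we pass from the $\Ld$-structure to the reduct. I would formalize this by the standard technique for these transfers: realize the relevant rows/columns of the array as an $\cL$-indiscernible (or mutually indiscernible) sequence using Fact~\ref{fact:transfer}\eqref{T1} (completeness, giving a monster model) together with~\eqref{T2}, and then argue that the resulting $\cL$-configuration over the prolonged tuples already exhibits the tree property in $T$.

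The main obstacle, and where the real work lies, is precisely this dimension jump from a single variable $x$ to the prolonged tuple $\hat x\in M^{1+r}$: Chernikov's reduction to one variable applies to $T$ but what I produce is a pattern in $1+r$ variables. The way around this is to use that, for an \emph{algebraically bounded} $T$, inconsistency of instances of $\psi$ on the image of the prolongation is controlled by algebraic dimension, which is itself definable and single-variable-coordinatized; alternatively, one can rerun Chernikov's argument on the $\cL$-side after noting that the array I build lives over parameters that are independent in the algebraic (acl-)sense, so the multi-variable pattern collapses to a one-variable one via the definability of $\dim$. For the antichain tree property the combinatorics differ—one works with a tree-indexed family and an antichain/chain consistency dichotomy rather than rows and columns—but the structural reduction through~\eqref{T2} is identical, so the same prolongation-and-reduct argument applies once the NTP$_2$ case is set up carefully; I would state the NATP case as a corollary of the same lemma, noting only the bookkeeping change in the index shape.

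Finally, I would isolate the common engine as a single lemma: if $T$ is algebraically bounded and a combinatorial inconsistency pattern (of the appropriate tree-property type) is realizable by $\Ld$-formulas over $\TdG$, then the corresponding pattern is realizable by $\cL$-formulas over $T$ via prolongation. Both halves of Theorem~\ref{thmb} then follow by contraposition. I expect the genericity axiom to enter only to guarantee that the prolonged tuples actually realize the translated patterns (so that no row/antichain is spuriously destroyed when passing between $\Ld$ and $\cL$), while algebraic boundedness is what makes the variable-count reduction go through.
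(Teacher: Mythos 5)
Your setup is the right one---Chernikov's one-variable reduction on the $\Ld$-side, Fact~\ref{fact:transfer}\eqref{T2} to rewrite the formula as $\psi(\nabla^r x,y)$, and prolonging the parameters---and this matches how the paper's proof begins. But you have misidentified the central difficulty, and the step you gloss over is exactly where the proof's real content lies. The issue is \emph{not} that the prolonged tuple $\hat x$ has $1+r$ coordinates: a TP$_2$ (or ATP) pattern for an $\cL$-formula in several free variables still witnesses TP$_2$ (or ATP) for $T$, so no second variable-reduction on the $T$-side is needed. The genuine obstruction is that the inconsistency half of the pattern does not survive the translation. Inconsistency of $\psi(\nabla^r x,a_{i,j})\wedge\psi(\nabla^r x,a_{i,j'})$ in the single variable $x$ says only that the set $X_{i,j}\cap X_{i,j'}\subseteq K^{1+r}$ contains no point of the form $\nabla^r x$; as a subset of $K^{1+r}$ with free coordinates it may well be nonempty, so $\psi(x_0,\ldots,x_r,y)$ with the prolonged parameters need not be $k$-inconsistent along rows (or chains) for any $k$. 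Your claim that ``the resulting $\cL$-configuration over the prolonged tuples already exhibits the tree property in $T$'' is precisely what fails. Genericity converts ``no $\nabla^r$-point'' only into ``the projection onto the first $r$ coordinates has dimension $<r$,'' and turning that dimension-theoretic statement into an honest inconsistency pattern is the bulk of the work.

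The paper bridges this gap with machinery absent from your proposal: take $r$ minimal such that $\psi(\nabla^r x,y)$ witnesses the tree property, and show (via a rational-function trick expressing $\delta^r x$ in terms of lower derivatives) that minimality forces the projections of the individual sets \emph{and} of the path/antichain intersections to have full dimension $r$ (Claims~\ref{claim1} and~\ref{claim2}); then use Noetherianity of the Zariski topology to stabilize the Zariski closures $Z_{i,m}$ of the projections of initial row-intersections, observe that $\dim Z_{i,m}<r$ by genericity applied to the row-inconsistency, and pass to the modified sets $Y_{i,j}=X_{i,j+m+1}\cap\pi^{-1}(K^r\setminus Z_{i,m})$. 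These are defined by a single $\cL$-formula that is genuinely $(m+1)$-inconsistent along rows (by the stabilization) while paths remain consistent (because their projections have dimension $r$ and so are not swallowed by the lower-dimensional $Z$'s); one then quotes the standard fact that $k$-TP$_2$ implies TP$_2$ (\cite[Proposition 5.7]{KKS14}), and its ATP analogue. Your proposal contains neither the minimality-of-$r$ device, nor the Noetherian stabilization, nor the passage to weak ($k$-fold) inconsistency, and the vague appeal to acl-independence of parameters and ``rerunning Chernikov's argument on the $\cL$-side'' does not substitute for them. As written, the argument has a genuine gap at its critical step.
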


Differentially large fields and generic derivations are both defined in the case of finitely many commuting derivations, and the results in Fact~\ref{fact:transfer} hold in this more general setting. It seems quite plausible that our Theorems~\ref{thma} and~\ref{thmb} also hold in this setting as well, but we do not investigate this here.

\section{Topological axioms and \'ez-fields}
In giving topological axioms for differential largeness, we need the following alternative axiomatization in terms of differential polynomials:

\begin{fact}[{\cite[Theorem 2.8]{LST25}}]\label{fact:altdifflarge}
A differential field $(K,\delta)$ is differentially large if and only if
\begin{enumerate}
\item $K$ is large as a field and
\item for all $r>0$, all $P\in K[X_0,\ldots,X_r]$, and all nonzero $Q\in K[X_0,\ldots,X_{r-1}]$, if there is $x \in K^{1+r}$ with $P(x) = 0$ and $\frac{\partial P}{\partial X_r}(x) \neq 0$, then there is $a \in K$ with 
\[
P(a,\delta a,\ldots,\delta^r a) = 0\neq Q(a,\delta a,\ldots,\delta^{r-1}a).
\]
\end{enumerate}
\end{fact}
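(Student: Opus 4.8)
The plan is to prove both directions by translating between differential field extensions of $(K,\delta)$ and the algebraic geometry of the hypersurface cut out by $P$. Throughout I will use the standard (Pop) characterization of large fields: $K$ is large if and only if $K$ is existentially closed, as a field, in the function field $K(V)$ of every geometrically integral $K$-variety $V$ possessing a smooth $K$-rational point, such a point supplying a $K$-rational place on $K(V)$.

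For the forward direction, assume $(K,\delta)$ is differentially large; then (1) is immediate, so I focus on (2). Given $P$, $Q$, and a point $x \in K^{1+r}$ with $P(x)=0$ and $\frac{\partial P}{\partial X_r}(x)\neq 0$, I first replace $P$ by the irreducible factor vanishing at $x$: since $x$ is a simple point the product rule shows this factor still has nonvanishing $X_r$-derivative at $x$, and the existence of a smooth $K$-point forces $V(P)$ to be geometrically integral. Let $\bar X=(\bar X_0,\dots,\bar X_r)$ be the generic point of $V(P)$, so that $\bar X_0,\dots,\bar X_{r-1}$ form a transcendence basis of $K(V(P))/K$ and $\bar X_r$ is separably algebraic over them. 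I then define a derivation $D$ on $K(V(P))$ extending $\delta$ by declaring $D\bar X_i=\bar X_{i+1}$ for $i<r$; differentiating the relation $P(\bar X)=0$ and dividing by $\frac{\partial P}{\partial X_r}(\bar X)$, which is nonzero at the generic point because it is nonzero at $x\in V(P)$, determines $D\bar X_r$ and shows $D$ is well defined. Setting $a=\bar X_0$ gives $(a,Da,\dots,D^r a)=\bar X$, whence $P(a,\dots,D^r a)=0$ and $Q(a,\dots,D^{r-1}a)=Q(\bar X_0,\dots,\bar X_{r-1})\neq 0$ by algebraic independence. Since $K$ is large and $x$ is a smooth $K$-point of the geometrically integral $V(P)$, $K$ is existentially closed in $K(V(P))$ as a field; differential largeness then makes $(K,\delta)$ existentially closed in $(K(V(P)),D)$, so the existential differential formula witnessed by $a$ descends to a solution in $K$.

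For the converse, assume $K$ is large and that (2) holds, and let $(L,D)\supseteq(K,\delta)$ be a differential field extension in which $K$ is existentially closed as a field; I must show $(K,\delta)$ is existentially closed in $(L,D)$. An existential formula over $K$ holding in $(L,D)$ amounts to a finite system of differential-polynomial equations and inequations with a witness $\bar b\in L^n$; combining the inequations by a Rabinowitsch trick and invoking the differential primitive element theorem (valid here in characteristic zero), I reduce to a single differential generator $\theta$ of $K\langle\bar b\rangle$ over $K$. Taking $P$ to be the minimal differential polynomial of $\theta$, of order $r$, its separant $\frac{\partial P}{\partial X_r}$ does not vanish at $(\theta,\dots,D^r\theta)$ by minimality, and after reducing the surviving inequation modulo $P$ and clearing separant denominators it takes the form $Q(\theta,\dots,D^{r-1}\theta)\neq 0$ with $Q$ of order $<r$. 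The tuple $(\theta,D\theta,\dots,D^r\theta)\in L^{1+r}$ is then an $L$-point of $\{P=0,\ \frac{\partial P}{\partial X_r}\neq 0\}$, so field-existential-closedness of $K$ in $L$ yields a $K$-point $x$ with $P(x)=0$ and $\frac{\partial P}{\partial X_r}(x)\neq 0$. Axiom (2) now supplies $a\in K$ with $P(a,\dots,\delta^r a)=0\neq Q(a,\dots,\delta^{r-1}a)$, and specializing $\theta\mapsto a$—legitimate since the separant and $Q$, the only denominators, are nonzero at $a$—produces a solution of the original system in $(K,\delta)$.

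The main obstacle is the reduction in the converse: passing from a system in several differential indeterminates to a single differential polynomial $P$ with a controlled inequation $Q$ of strictly smaller order, while guaranteeing the separant condition. This is where the Ritt–Kolchin theory (characteristic sets, separants, the differential primitive element theorem) does the real work, and where one must treat the degenerate case in which the witness is differentially transcendental over $K$, handled by choosing $P$ of large enough order or by reducing directly to the purely algebraic statement governed by largeness. By comparison, the forward direction is essentially a derivation-extension computation packaged by the geometric characterization of largeness.
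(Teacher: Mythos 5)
This statement is quoted in the paper as a Fact from \cite[Theorem 2.8]{LST25}; the paper itself contains no proof, so there is nothing internal to compare against and your argument must be judged on its own terms. The forward direction is essentially correct: passing to the irreducible factor of $P$ through $x$ (which survives with nonvanishing $X_r$-derivative and is geometrically integral because of the smooth $K$-point), extending $\delta$ to the function field by $D\bar{X}_i=\bar{X}_{i+1}$, and combining largeness (existential closedness of $K$ in $K(V(P))$) with differential largeness is the natural derivation-extension argument, and I see no gap there.

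The converse, however, has two genuine gaps. First, the reduction to a single differential indeterminate via the differential primitive element theorem is not available in general: that theorem requires $K\langle\bar b\rangle$ to be differentially algebraic over $K$ (plus a nondegeneracy hypothesis on constants), and when the witness $\bar b$ has differential transcendence degree at least $2$ over $K$ there is no single generator at all. Your proposed fix (``choosing $P$ of large enough order'') handles only one differentially transcendental generator; the passage from the one-variable, one-equation condition (2) to existential closedness for arbitrary systems in several differential variables is the real content of the theorem, and it is not addressed. Second, the final specialization $\theta\mapsto a$ is legitimate only if the separant $S=\partial P/\partial X_r$ is nonzero at $(a,\delta a,\ldots,\delta^r a)$: that is what places $a$ in the general component $[P]:S^\infty$ and lets every differential consequence of $P(\theta)=0$ --- in particular the equations of the original system rewritten in terms of $\theta$ --- transfer to $a$. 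But $S$ has order $r$, while condition (2) only permits an inequation $Q$ of order $<r$; one must fold $\operatorname{Res}_{X_r}(P,S)$ (and likewise the resultant of $P$ with the Ritt remainder of the surviving inequation, which a priori has order $r$, not $<r$) into $Q$ and recover nonvanishing of $S$ at $a$ via the B\'ezout identity. You assert that the separant is nonzero at $a$ without justification, so as written the specialization step does not go through.
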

Let $V$ be a $K$-variety and let $V(K)$ denote the set of $K$-points of $V$. The \textbf{\'etale open topology} or \textbf{$\cE_K$-topology} on $V(K)$ is the topology with basis given by sets of the form $f(W(K))$ for \'etale morphisms $f\colon W\to V$. Equipping the $K$-points of each $K$-variety with the $\cE_K$-topology, we obtain a \emph{system of topologies}~\cite[Theorem A]{JTWY24}, meaning that morphisms $f\colon V\to W$ between $K$-varieties induce continuous maps $V(K)\to W(K)$ with respect to the $\cE_K$-topologies, and that these induced maps are open (resp.\ closed) embeddings whenever $f$ is an open (resp.\ closed) immersion. The field $K$ is \textbf{large} if and only if the topology on $V(K)$ is non-discrete whenever $V(K)$ is infinite~\cite[Theorem C]{JTWY24}. For our purposes, we can take this as a definition of largeness. An $\cL_{\ring}(K)$-definable set $X \subseteq V(K)$ is \textbf{\'ez} if $X$ is a finite union of \'etale open subsets of Zariski closed subsets of $V(K)$. If $K$ is not large, then any $\cL_{\ring}(K)$-definable set is \'ez. For large fields, the class of \'ez sets is quite well-behaved:

\begin{fact}[{\cite[Theorems A and B(2)]{WY23}}]\label{fact:ez}
Suppose $K$ is large (and perfect, but we always assume characteristic zero in this note). 
\begin{enumerate}
\item\label{E1} The class of \'ez sets is closed under morphisms of $K$-varieties; in particular, all existentially $\cL_{\ring}(K)$-definable sets are \'ez. 
\item\label{E2} For $V$ a smooth irreducible $K$-variety and $X \subseteq V(K)$ a nonempty \'ez set, we have $\dim X = \dim V$ if and only if $X$ has nonempty $\cE_K$-interior in $V(K)$.
\end{enumerate}
\end{fact}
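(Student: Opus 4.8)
The plan is to prove \eqref{E1} and \eqref{E2} separately, in each case reducing to a statement about the image of the \emph{full} set of $K$-points of a variety and then exploiting that we are in characteristic zero, where generic smoothness is available. For \eqref{E1}, I would first record the closure properties immediate from the definition: a finite union of \'ez sets is \'ez, and every Zariski locally closed subset of $V(K)$ is \'ez, since a Zariski open subset $U$ of a Zariski closed $Z\subseteq V$ is the image of the open immersion $U\hookrightarrow Z$, which is \'etale, so $U$ is \'etale open in $Z(K)$. As a quantifier-free $\cL_{\ring}(K)$-formula defines a constructible set (a finite union of locally closed sets), this already shows every quantifier-free definable set is \'ez; the existential case then follows once closure under morphisms is in hand, because an existentially definable set is the image of a constructible set under a coordinate projection. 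To prove closure under a morphism $g\colon V\to V'$, I would reduce, using closure under finite unions, to the case $X=f(W(K))$ with $f\colon W\to Z$ \'etale for some Zariski closed $Z\subseteq V$; composing $f$ with the restriction of $g$ to $Z$ gives $g(X)=(g|_Z\circ f)(W(K))$, so it suffices to show that $h(W(K))$ is \'ez for an \emph{arbitrary} morphism $h\colon W\to V'$.

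I would prove this last statement by Noetherian induction on $\dim W$. Passing to the smooth locus of $W$ (a dense Zariski open whose complement has smaller dimension and is handled by the induction hypothesis), I may assume $W$ is smooth, and then, in characteristic zero, generic smoothness supplies a dense Zariski open $W_0\subseteq W$ on which $h$ is smooth onto the Zariski closure $\overline{h(W_0)}$. The key geometric input is that a smooth morphism induces an \emph{open} map on $K$-points for the \'etale open topologies; granting this, $h(W_0(K))$ is \'etale open in the Zariski closed set $\overline{h(W_0)}(K)$, hence \'ez as a single piece. Since $\dim(W\setminus W_0)<\dim W$, the induction hypothesis makes $h((W\setminus W_0)(K))$ \'ez, and $h(W(K))$ is the union of these two \'ez sets; the base case $\dim W=0$ is a finite set of points, each of which is \'ez.

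For \eqref{E2}, the implication $\dim X=\dim V\Rightarrow$ nonempty interior is the easy one. Writing $X=\bigcup_i U_i$ with each $U_i$ \'etale open in a Zariski closed $Z_i\subseteq V$, full dimension of $X$ forces $\dim U_i=\dim V$ for some $i$, whence $\dim Z_i=\dim V$; since $V$ is irreducible this gives $Z_i=V$, so $U_i$ is a nonempty $\cE_K$-open subset of $V(K)$ contained in $X$, witnessing nonempty interior. For the converse, a nonempty interior contains a nonempty basic \'etale open $f(W(K))$ with $f\colon W\to V$ \'etale; because $V$ is smooth, $W$ is smooth, so any $K$-point of $W$ is a smooth point, and the defining property of large fields forces $W(K)$ to be Zariski dense in $W$. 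Pushing forward along $f$, which is dominant onto a Zariski dense open of the irreducible $V$, shows $f(W(K))$ is Zariski dense in $V(K)$, so $\dim X=\dim V$.

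The main obstacle, and where I expect to spend the most effort, is the geometric lemma underlying both parts: that smooth morphisms are open maps for the \'etale open topology, and, relatedly, that nonempty $\cE_K$-open subsets of an irreducible variety over a large field are Zariski dense. Both live at the level of the structure theory of the \'etale open topology~\cite{JTWY24}: the first should follow by factoring a smooth morphism \'etale-locally as an \'etale map followed by a projection from affine space and checking that each factor is $\cE_K$-open, while the second is essentially a reformulation of largeness through the system-of-topologies formalism. Once these are secured, the combinatorial bookkeeping in \eqref{E1} and \eqref{E2} is routine.
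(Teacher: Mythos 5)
This statement enters the paper as a black-box Fact imported from \cite[Theorems A and B(2)]{WY23}; the paper gives no proof of it, so there is nothing internal to compare your argument against. Judged on its own terms, your sketch follows what is essentially the Walsberg--Ye strategy: generic smoothness in characteristic zero together with openness of smooth morphisms for the \'etale open topology for~\eqref{E1}, and the characterization of largeness via Zariski density of $K$-points of smooth varieties with a $K$-point for~\eqref{E2}. Part~\eqref{E2} is sound as written, and so is the ``in particular'' clause of~\eqref{E1}, since existentially definable sets are projections of constructible sets and your main lemma (that $h(W(K))$ is \'ez for an arbitrary morphism $h$) applies directly to the locally closed pieces.

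There is, however, one genuine gap in the reduction for the full closure-under-morphisms statement. You reduce ``$g(X)$ is \'ez for $X$ \'ez'' to the case $X=f(W(K))$ with $f$ \'etale, ``using closure under finite unions.'' But an \'etale open subset of $Z(K)$ is by definition an arbitrary --- possibly infinite --- union of basic opens $f(W(K))$, and definability does not by itself yield a finite subcover (no compactness is available here). So $g(X)$ is a priori only an infinite union of \'ez sets, which need not be \'ez. The missing ingredient is a finite presentation theorem for definable \'ez sets --- essentially \cite[Theorem B(1)]{WY23}, which this paper invokes separately in the proof of Proposition~\ref{prop:axioms} --- and establishing it is a nontrivial part of the work you are sketching. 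Two smaller, easily repaired points: your Noetherian induction should first decompose $W$ into irreducible components (otherwise ``dense open with lower-dimensional complement'' can fail), and generic smoothness should be applied over the smooth locus of $\overline{h(W)}$ rather than over $\overline{h(W)}$ itself, composing with the open immersion afterwards.
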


Now let $\delta$ be a derivation on $K$. For a variety $V$, we let $\tau V$ denote the \textbf{prolongation} of $V$, and we let $\pi_V$ denote the projection map $\tau V\to V$; see~\cite{Mo22}. The prolongation is an analog of the tangent bundle $TV$ that takes the derivatives of defining parameters into account:\ if $x \in V(K)$, then $\delta x \in (\tau_xV)(K)$, and when $V$ is defined over the constant field $\ker(\delta)$, the prolongation and tangent bundle coincide. For $a = (a_1,\ldots,a_n) \in K^n$, we let $\delta a \coloneqq (\delta a_1,\ldots,\delta a_n)$, and for $r \in \N$, we let $\nabla^r(a) \coloneqq (a,\delta a,\ldots,\delta^ra) \in K^{(1+r)n}$.

\begin{proposition}\label{prop:axioms}
Suppose that $K$ is a large field and let $\delta$ be a derivation on $K$. The following are equivalent:
\begin{enumerate}
\item\label{L1} For every smooth irreducible $K$-variety $V$ and every \'{e}z set $X \subseteq (\tau V)(K)$, if $\pi_V(X) \subseteq V(K)$ has $\cE_K$-interior, then there is $a \in V(K)$ with $(a,\delta a) \in X$.
\item\label{L2} For every \'ez set $X \subseteq K^{2r}$, if $\pi(X) \subseteq K^r$ has $\cE_K$-interior, then there is $a \in K^r$ with $(a,\delta a) \in X$. 
\item\label{L3} For every \'ez set $X \subseteq K^{r+1}$, if  $\pi(X) \subseteq K^r$  has $\cE_K$-interior, then there is $y \in K$ with $\nabla^r(y) \in X$. 
\item\label{L4} $(K,\delta)$ is differentially large.
\end{enumerate}
\end{proposition}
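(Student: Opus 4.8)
The plan is to establish the cycle of implications \eqref{L1}$\,\Rightarrow\,$\eqref{L2}$\,\Rightarrow\,$\eqref{L3}$\,\Rightarrow\,$\eqref{L4}$\,\Rightarrow\,$\eqref{L1}. The first implication is immediate by specialization: the affine space $V=\A^r$ is smooth and irreducible, its prolongation is $\tau V=\A^{2r}$ with $\pi_V$ the projection to the first $r$ coordinates, and $\nabla(a)=(a,\delta a)$, so \eqref{L1} applied to $\A^r$ is precisely \eqref{L2}.

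For \eqref{L2}$\Rightarrow$\eqref{L3} I encode the higher jet as a first-order system. Given an \'ez set $X\subseteq K^{r+1}$ with $\pi(X)$ having $\cE_K$-interior, let $\iota\colon K^{r+1}\to K^{2r}$ be the linear embedding sending $(a_0,\dots,a_{r-1},c)$ to $(a_0,\dots,a_{r-1},a_1,\dots,a_{r-1},c)$, and put $X'\coloneqq\iota(X)$, which is \'ez by Fact~\ref{fact:ez}\eqref{E1}. Any point of the form $(a,\delta a)\in X'$ with $a\in K^r$ forces $\delta a_i=a_{i+1}$ for $i<r-1$, hence $a_i=\delta^i a_0$ and $\nabla^r(a_0)\in X$; as $\pi(X')=\pi(X)$ still has interior, \eqref{L2} supplies such a point. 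For \eqref{L3}$\Rightarrow$\eqref{L4} I verify the criterion of Fact~\ref{fact:altdifflarge}. Given $P$, a nonzero $Q$, and a witness $x$ with $\tfrac{\partial P}{\partial X_r}(x)\neq 0$, set
\[
X\coloneqq\Bigl\{u\in K^{r+1}: P(u)=0,\ \tfrac{\partial P}{\partial X_r}(u)\neq 0,\ Q(u_0,\dots,u_{r-1})\neq 0\Bigr\},
\]
a Zariski-open subset of the hypersurface $\{P=0\}$ intersected with $\{Q\neq0\}$, hence \'ez. The projection of $\{P=0,\ \partial P/\partial X_r\neq 0\}$ to $K^r$ is the image of an \'etale map onto $\A^r$, so it is $\cE_K$-open, and by Fact~\ref{fact:ez}\eqref{E2} it is Zariski-dense; it therefore meets the Zariski-open set $\{Q\neq0\}$, so $\pi(X)$ is nonempty and $\cE_K$-open, and \eqref{L3} yields the desired $y$.

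The substance of the proof is \eqref{L4}$\Rightarrow$\eqref{L1}, which I carry out directly from the definition of differential largeness. Let $V$ be smooth irreducible and $X\subseteq(\tau V)(K)$ \'ez with $\pi_V(X)$ of $\cE_K$-interior. Since a finite union of \'ez sets has interior only if one of its pieces has full dimension (Fact~\ref{fact:ez}\eqref{E2}), I may replace $X$ by a single \'etale-open subset $U$ of an irreducible Zariski-closed $Z\subseteq(\tau V)(K)$ with $\pi_V(U)$ Zariski-dense in $V$ and $U$ Zariski-dense in $Z$. Passing to a generic point $\xi$ of $Z$ over $K$, I observe that $U$, being \'etale-open and dense in $Z$, is defined by an existential $\cL_{\ring}$-formula $\psi$ satisfied by $\xi$ over some finite extension, and that $\alpha\coloneqq\pi_V(\xi)$ is a (smooth) point of $V$. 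Because $\xi$ lies in the fibre $(\tau V)_\alpha$, I can extend $\delta$ to a derivation on the field $L=K(\xi)$ — enlarged to realise the witness of $\psi$ — with $\nabla(\alpha)=\xi$, so that $(L,\delta)\models\psi(\nabla\alpha)$.

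The main obstacle is to guarantee that $K$ is existentially closed in $L$ as a \emph{field}. This is where largeness enters: $L$ is the function field of a smooth irreducible $K$-variety carrying a $K$-point (the nonemptiness of $U$ provides one), so largeness forces $K$ to be existentially closed in $L$. Differential largeness then applies to the differential field extension $(L,\delta)\supseteq(K,\delta)$: the existential differential sentence $\exists a\,\psi(\nabla a)$ holds in $(L,\delta)$, hence descends to $(K,\delta)$, producing $a\in V(K)$ with $(a,\delta a)\in U\subseteq X$. The delicate points requiring care are that \'ez sets are existentially $\cL_{\ring}(K)$-definable (a basic \'etale-open set is the image of an \'etale morphism, hence existential), that a point of $(\tau V)_\alpha$ over the generic $\alpha$ genuinely yields a derivation on $L$ with the prescribed $\nabla$-value, and that the smooth model whose function field is $L$ really does carry a $K$-point, so that largeness delivers field-theoretic existential closedness.
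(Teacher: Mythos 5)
Your cycle \eqref{L1}$\Rightarrow$\eqref{L2}$\Rightarrow$\eqref{L3}$\Rightarrow$\eqref{L4}$\Rightarrow$\eqref{L1} is the same as the paper's, and the first three implications essentially reproduce its arguments: $V=\A^r$ for \eqref{L1}$\Rightarrow$\eqref{L2}, the same coordinate-repeating embedding for \eqref{L2}$\Rightarrow$\eqref{L3}, and Fact~\ref{fact:altdifflarge} for \eqref{L3}$\Rightarrow$\eqref{L4}. Your verification there that $\pi(X)$ has $\cE_K$-interior (the projection of $\{P=0,\ \partial P/\partial X_r\neq 0\}$ is an \'etale image, hence $\cE_K$-open, and meets $\{Q\neq 0\}$ by Zariski density) differs in phrasing from the paper's appeal to Fact~\ref{fact:ez}\eqref{E2} but is correct.

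The gap is in \eqref{L4}$\Rightarrow$\eqref{L1}, exactly at the step you yourself flag as delicate: you do not establish that $K$ is existentially closed in $L$ as a field. Your reduction uses only the definition of an \'ez set, so it yields an \'etale-open subset $U$ of an \emph{irreducible Zariski-closed} $Z\subseteq(\tau V)(K)$ with no smoothness; the $K$-point supplied by $U\neq\emptyset$ may be a singular point of $Z$, in which case it need not survive to any smooth model of $L=K(\xi)$, and largeness then gives you nothing. (Enlarging $L$ by a witness of $\psi$ compounds this, since you then need existential closedness in a field that is no longer visibly the function field of a variety you control; your claim that the arbitrary \'etale-open set $U$ is existentially definable is likewise unjustified from what is quoted --- only basic open sets are manifestly \'etale images.) The paper avoids all of this in two ways: it invokes \cite[Theorem B(1)]{WY23} to decompose $X$ into $\cE_K$-open subsets of \emph{smooth} irreducible subvarieties $W_i$ of $\tau V$ and then shrinks to a \emph{basic} open set, which is existentially $\cL_{\ring}(K)$-definable; and, decisively, it realizes the $K$-generic point of $W$ lying in $X$ inside an \emph{elementary} extension $K^*\succeq K$ rather than in $K(\xi)$, so that existential closedness of $K$ in $K^*$ is automatic and largeness is not needed at that step. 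Your outline can be repaired along your own lines --- with the smooth decomposition in hand, a nonempty basic open subset of $W(K)$ gives a smooth $K$-point, largeness makes the $K$-points Zariski dense, and $K$ is then existentially closed in the function field of the relevant component of the \'etale cover witnessing $U$ --- but as written the field-theoretic existential closedness, which is the hypothesis that differential largeness requires, is unsupported.
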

\begin{proof}
For~\eqref{L1}$\Rightarrow$\eqref{L2}, just take $V = \A^r$. Suppose~\eqref{L2} holds and let $X \subseteq K^{1+r}$ be as in~\eqref{L3}. Consider the morphism $f \colon \A^{1+r}\to \A^{2r}$ given by 
\[
(x_0,\ldots,x_r) \mapsto (x_0,\ldots,x_{r-1},x_1,\ldots,x_r).
\]
Then $f(X) \subseteq K^{2r}$ is \'ez by Fact~\ref{fact:ez}\eqref{E1} and $\pi(f(X)) = \pi(X)$, so~\eqref{L2} gives $a = (a_0,\ldots,a_{r-1}) \in K^r$ with $(a,\delta a) \in f(X)$. For $y \coloneqq a_0$, we have $\nabla^r(y) \in X$.

To see that~\eqref{L3}$\Rightarrow$\eqref{L4}, we use Fact~\ref{fact:altdifflarge}. Let $r>0$, $P\in K[X_0,\ldots,X_r]$, and $Q\in K[X_0,\ldots,X_{r-1}]^{\neq0}$. Suppose there is $x \in K^{1+r}$ with $P(x) = 0$ and $\frac{\partial P}{\partial X_r}(x) \neq 0$. Then $x$ is a smooth $K$-rational point of $V_P$, the zero-locus of $P$, so we may assume that $V_P$ is smooth and irreducible. Take $X\coloneqq V_P(K)\setminus V_Q(K)$, so $X$ is \'ez and $\pi(X) \subseteq K^r$ has $\cE_K$-open interior by Fact~\ref{fact:ez}\eqref{E2}. Then~\eqref{L3} gives $y \in K$ with $\nabla^r(y)  \in X$. 

Finally, suppose that $(K,\delta)$ is differentially large and let $V,X$ be as in~\eqref{L1}. Then $\tau V$ is smooth as well, so we can use~\cite[Theorem B(1)]{WY23} to take smooth irreducible disjoint subvarieties $W_1,\ldots,W_n$ of $\tau V$ and $\cL_{\ring}(K)$-definable $\cE_K$-open subsets $X_i \subseteq W_i(K)$ for each $i$ with $X = X_1\cup\cdots \cup X_n$. Then $\pi_V(X_i)$ is $\cE_K$-open in $V(K)$ for some $i$, so we set $W \coloneqq W_i$ for this $i$ and we replace $X$ with $X_i$. Further shrinking $X$, we may assume that $X$ is a basic $\cE_K$-open subset of $W(K)$, so $X$ is existentially $\cL_{\ring}(K)$-definable. As $X$ is $\cE_K$-open in $W(K)$, we can take an elementary $\cL_{\ring}$-extension $K^* \succeq K$ containing a tuple $(x,u) \in X^*$ that is $K$-generic in $W$ using Fact~\ref{fact:ez}\eqref{E2}. Then $x$ is $K$-generic in $V$ and $(x,u) \in \tau V(K^*)$, so we may extend $\delta$ to a derivation $\delta^*\colon K^*\to K^*$ with $\delta^* x = u$; see~\cite[Chapter IV, Theorems 14 and 18]{Ja64}. As $(K,\delta)$ is differentially large and $K$ is $\cL_{\ring}$-existentially closed in $K^*$, the differential field $(K,\delta)$ is existentially closed in $(K^*,\delta^*)$. As $X$ is existentially $\cL_{\ring}(K)$-definable, we find $a \in V(K)$ with $(a,\delta a) \in X$.
\end{proof}

When $K$ is not large, the conditions in Proposition~\ref{prop:axioms} are trivially equivalent (they never hold). Using~\eqref{L3}$\Rightarrow$\eqref{L4} of Proposition~\ref{prop:axioms} and Fact~\ref{fact:ez}, we have:

\begin{corollary}\label{cor:oneway}
Suppose that $K$ expands a large field and that $\delta$ is generic. Then the underlying differential field $(K,\delta)$ is differentially large.
\end{corollary}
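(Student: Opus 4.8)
The plan is to derive Corollary~\ref{cor:oneway} directly from the implication \eqref{L3}$\Rightarrow$\eqref{L4} of Proposition~\ref{prop:axioms}. Since that implication shows differential largeness follows from condition \eqref{L3}, it suffices to verify that genericity of $\delta$ forces \eqref{L3} to hold. So let $X \subseteq K^{r+1}$ be an $\cL_{\ring}(K)$-definable \'ez set whose projection $\pi(X) \subseteq K^r$ has nonempty $\cE_K$-interior, and I must produce $y \in K$ with $\nabla^r(y) \in X$.

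The key step is to translate the topological hypothesis ``$\pi(X)$ has $\cE_K$-interior'' into the dimension hypothesis demanded by the definition of genericity, namely that the projection of $X$ onto its first $r$ coordinates has dimension $r$. First I would observe that $X$, being \'ez, is in particular an $\cL_{\ring}(K)$-definable (hence $\cL(K)$-definable) subset of $K^{r+1}$, so $X$ is a legitimate input to the genericity axiom. Next, $\pi(X)$ is itself \'ez: it is the image of the \'ez set $X$ under the coordinate projection $\A^{r+1} \to \A^r$, which is a morphism of $K$-varieties, so Fact~\ref{fact:ez}\eqref{E1} applies. Now I invoke Fact~\ref{fact:ez}\eqref{E2} with $V = \A^r$ (which is smooth and irreducible, so $\dim V = r$): since the nonempty \'ez set $\pi(X)$ has nonempty $\cE_K$-interior in $K^r$, we conclude $\dim \pi(X) = r$. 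This is exactly the dimension hypothesis in the definition of genericity.

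With $\dim \pi(X) = r$ in hand, genericity of $\delta$ immediately yields $a \in K$ with $(a, \delta a, \ldots, \delta^r a) \in X$, that is, $\nabla^r(a) \in X$. Taking $y \coloneqq a$ establishes \eqref{L3}. Since $K$ is large by assumption, Proposition~\ref{prop:axioms} then gives that $(K,\delta)$ is differentially large via \eqref{L3}$\Rightarrow$\eqref{L4}.

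I expect the only genuine subtlety to lie in the dimension/interior translation, specifically in making sure the hypotheses of Fact~\ref{fact:ez} are cleanly met: that $X$ and hence $\pi(X)$ are genuinely \'ez (closure under the projection morphism), that $\pi(X)$ is nonempty (automatic once it has nonempty interior), and that Fact~\ref{fact:ez}\eqref{E2} is being applied to the smooth irreducible variety $\A^r$ so that the equivalence ``$\dim = \dim V$ iff nonempty interior'' is available. The rest is a direct quotation of the genericity axiom, so there is no real obstacle beyond this bookkeeping.
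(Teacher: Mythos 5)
Your proposal is correct and is exactly the argument the paper intends (the paper compresses it to ``use \eqref{L3}$\Rightarrow$\eqref{L4} of Proposition~\ref{prop:axioms} and Fact~\ref{fact:ez}''): you verify \eqref{L3} by using Fact~\ref{fact:ez}\eqref{E1} to see that $\pi(X)$ is \'ez and Fact~\ref{fact:ez}\eqref{E2} on $\A^r$ to convert nonempty $\cE_K$-interior into $\dim\pi(X)=r$, then apply genericity. The bookkeeping points you flag (that \'ez sets are $\cL(K)$-definable and that largeness is needed for Fact~\ref{fact:ez}) are handled correctly.
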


An \textbf{\'ez-field} is by definition a large field for which every definable set is an \'ez set. For these fields, we get the converse:

\begin{corollary}\label{cor:conv}
Suppose that $\cL = \cL_{\ring}$ and that $K$ is an \'ez-field. If $(K,\delta)$ is differentially large, then $\delta$ is generic.
\end{corollary}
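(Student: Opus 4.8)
The plan is to unwind the definition of genericity and reduce it, via the topological characterization of Proposition~\ref{prop:axioms}, to condition~\eqref{L3} of that proposition, which holds because $(K,\delta)$ is differentially large. So suppose $\cL = \cL_{\ring}$, that $K$ is an \'ez-field, and that $(K,\delta)$ is differentially large. To verify genericity, I start with an arbitrary $\cL_{\ring}(K)$-definable set $X \subseteq K^{1+r}$ whose projection $\pi(X)$ onto the first $r$ coordinates has $\dim \pi(X) = r$, and I must produce $a \in K$ with $\nabla^r(a) = (a,\delta a,\ldots,\delta^r a) \in X$.

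The first step is the crucial use of the \'ez-field hypothesis: since $K$ is an \'ez-field, the definable set $X$ is automatically \'ez, and likewise its projection $\pi(X) \subseteq K^r$ is \'ez (either directly, or by Fact~\ref{fact:ez}\eqref{E1}, since projection is a morphism of $K$-varieties). This is precisely the point where we need the good understanding of definable sets that \'ez-fields provide, and it is what makes the topological axioms applicable. The second step translates the dimension hypothesis into a topological one: applying Fact~\ref{fact:ez}\eqref{E2} with $V = \A^r$ (smooth and irreducible), the condition $\dim \pi(X) = r = \dim \A^r$ is equivalent to $\pi(X)$ having nonempty $\cE_K$-interior in $K^r$. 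Thus $X$ is an \'ez subset of $K^{r+1}$ whose projection to $K^r$ has $\cE_K$-interior---exactly the hypothesis of Proposition~\ref{prop:axioms}\eqref{L3}.

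The final step invokes the proposition itself: since $(K,\delta)$ is differentially large, condition~\eqref{L4} holds, hence by the equivalence in Proposition~\ref{prop:axioms} so does~\eqref{L3}. Applying~\eqref{L3} to our set $X$ yields $y \in K$ with $\nabla^r(y) \in X$, which is precisely the witness required for genericity. As this works for every such $X$, the derivation $\delta$ is generic.

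I do not expect a serious obstacle here, as most of the work has been front-loaded into Proposition~\ref{prop:axioms} and into the definition of an \'ez-field. The one place deserving care is the translation between the ``dimension'' formulation of genericity and the ``$\cE_K$-interior'' formulation of the topological axioms; this is the role of Fact~\ref{fact:ez}\eqref{E2}, and one should confirm that it applies to $\pi(X)$ as a (possibly non-smooth, but here simply $\A^r$) target, which is why taking $V = \A^r$ is convenient. Everything else is a direct chaining of the definition of genericity, the \'ez-field hypothesis, and the already-established equivalence \eqref{L3}$\Leftrightarrow$\eqref{L4}.
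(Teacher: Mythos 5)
Your proof is correct and follows exactly the route the paper intends (the paper leaves this corollary's proof implicit, pointing to Proposition~\ref{prop:axioms} and Fact~\ref{fact:ez}): the \'ez-field hypothesis makes every definable set \'ez, Fact~\ref{fact:ez}\eqref{E2} applied to $V=\A^r$ converts $\dim\pi(X)=r$ into nonempty $\cE_K$-interior, and \eqref{L4}$\Rightarrow$\eqref{L3} of Proposition~\ref{prop:axioms} supplies the witness. No gaps.
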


\begin{remark}\label{rem:widawski}
Widawski also considers connections between differential largeness and the \'etale open topology in his thesis, introducing the notion of an \emph{\'etale prepared system} and characterizing differential largeness (with several commuting derivations) in terms of the existence of solutions to such a system~\cite[Theorem 5.3.5]{Wi24}. In~\cite[Theorem 5.4.5]{Wi24}, he essentially proves the equivalence \eqref{L3}$\Leftrightarrow$\eqref{L4} in Proposition~\ref{prop:axioms} above, also in the case of a single derivation. We note that our methods give a quick proof of his quantifier elimination result of differentially large Frobenius fields of characteristic zero~\cite[Theorem 6.3.6]{Wi24} in the case of a single derivation: the derivation on such a field is generic by Corollary~\ref{cor:conv} since perfect Frobenius fields are \'ez~\cite[Theorem 7.1]{WY23}, and quantifier elimination follows by~\eqref{T4} of Fact~\ref{fact:transfer}.
\end{remark}

\section{Transferring \texorpdfstring{NTP$_2$}{NTP2} and NATP}

In this section, $K$ is an algebraically bounded structure and $\delta$ is a generic derivation on $K$. We also assume that $(K,\delta)$ is sufficiently saturated.

A formula $\varphi(x,y)$ (where $x,y$ are tuples of variables) has the \textbf{tree property of the second kind} (TP$_2$) if there is an array of tuples $(a_{i,j})_{i,j<\omega}$ such that 
\begin{enumerate}
\item The formula $\phi(x,a_{i,j})\wedge \phi(x,a_{i,j'})$ is inconsistent for all $i$ and all $j< j'$.
\item The partial type $\{\phi(x,a_{i,f(i)}):i <\omega\}$ is consistent for all $f\colon \omega\to \omega$.
\end{enumerate}
A theory $T$ has TP$_2$ if some formula has TP$_2$.

\begin{theorem}\label{thm:NTP2}
If $\TdG$ has TP$_2$, then so does $T$.
\end{theorem}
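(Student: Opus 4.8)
The plan is to prove the contrapositive in the form given: assuming $\TdG$ has TP$_2$, witnessed by some $\Ld$-formula $\varphi(x,y)$ with an array $(a_{i,j})_{i,j<\omega}$, I want to produce an $\cL$-formula witnessing TP$_2$ for $T$. The key tool is Fact~\ref{fact:transfer}\eqref{T2}: every $\Ld$-formula is $\TdG$-equivalent to an $\cL$-formula in the jet variables. So first I would replace $\varphi(x,y)$ by an $\cL$-formula $\psi(x_0,\ldots,x_s,y_0,\ldots,y_s)$ with
\[
\TdG \models \forall x\,\forall y\,\bigl(\varphi(x,y) \leftrightarrow \psi(\nabla^s x,\nabla^s y)\bigr),
\]
where $\nabla^s$ denotes the tuple of iterated derivatives up to order $s$. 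Setting $b_{i,j} \coloneqq \nabla^s(a_{i,j})$, the inconsistency and path-consistency conditions for $\varphi(x,y)$ translate, via the $\exists x$ witnessed by $\nabla^s$ of a single realization, into analogous statements about $\psi(x_0,\ldots,x_s,b_{i,j})$ over the reduct $K \models T$. The hoped-for conclusion is that $\psi$ itself has TP$_2$ over $T$ with the array $(b_{i,j})$.

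The main obstacle is that the reduction to $\psi$ does not automatically preserve the two defining conditions of TP$_2$, because the existential quantifier over $x$ in the $\TdG$ statements ranges over elements together with all their derivatives, whereas in the $\cL$-reduct the variables $x_0,\ldots,x_s$ range freely and need not come from a single differential element. Path-consistency transfers without difficulty: if $\{\varphi(x,a_{i,f(i)}) : i<\omega\}$ is consistent in a model of $\TdG$, realized by some $c$, then $\nabla^s(c)$ realizes $\{\psi(x_0,\ldots,x_s,b_{i,f(i)}) : i<\omega\}$ in the $\cL$-reduct, so the $\cL$-type is consistent. The genuine difficulty is in the other direction, with row-inconsistency: the $\Ld$-inconsistency of $\varphi(x,a_{i,j})\wedge\varphi(x,a_{i,j'})$ gives the inconsistency of $\psi(\nabla^s x, b_{i,j})\wedge\psi(\nabla^s x, b_{i,j'})$ only along the \emph{differential} locus, not for arbitrary $x_0,\ldots,x_s$, so the naive $\cL$-array $(b_{i,j})$ may fail condition (1).

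To overcome this I would not try to use $\psi$ directly but would instead pass to the associated formula that builds in the prolongation structure. The idea is that genericity lets one translate the differential constraint into a dimension/prolongation condition: a tuple $(x_0,\ldots,x_s)$ arises as $\nabla^s(c)$ for some $c$ precisely when it is a $K$-point of the appropriate prolongation variety, and by Definition~\ref{prop:axioms}-style genericity one can, along each row, realize a path witness $c$ whose jet lands inside the definable set. Concretely, I expect the correct $\cL$-witness formula to be $\psi$ restricted (by conjoining the equations cutting out the prolongation, or equivalently by the construction in Fact~\ref{fact:transfer}\eqref{T2}) so that row-inconsistency in $\TdG$ becomes honest $\cL$-inconsistency: if some $\cL$-tuple satisfied both $\psi(\,\cdot\,,b_{i,j})$ and $\psi(\,\cdot\,,b_{i,j'})$ on the prolongation locus, then by genericity there would be an actual differential element $c$ with $\nabla^s(c)$ near that tuple satisfying $\varphi(x,a_{i,j})\wedge\varphi(x,a_{i,j'})$, contradicting row-inconsistency in $\TdG$. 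Checking that genericity supplies exactly this witness, uniformly across the infinite array and in a sufficiently saturated model, is the step I expect to require the most care; it is where the algebraic boundedness of $K$ and the definability of $\dim$ do the real work.
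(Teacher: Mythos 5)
You have correctly located the crux: path-consistency transfers immediately via $\nabla^s$, while row-inconsistency does not, because the $\cL$-variables $x_0,\ldots,x_s$ range freely rather than over jets of a single element. But your proposed repair does not work as stated. There are no ``equations cutting out the prolongation locus'' of the set of jets $\{\nabla^s c : c \in K\}$---this set is not $\cL$-definable (the derivation is not $\cL$-definable), so there is nothing to conjoin to $\psi$ that restricts it to tuples arising from differential elements. Moreover, genericity cannot promote a single tuple satisfying $\psi(\,\cdot\,,b_{i,j})\wedge\psi(\,\cdot\,,b_{i,j'})$ to a differential witness ``near'' it: genericity produces $c$ with $\nabla^r c \in X$ only when the projection of $X\subseteq K^{1+r}$ onto the first $r$ coordinates has \emph{full} dimension $r$. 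What it actually yields is the contrapositive: the pairwise row-intersections project to sets of dimension $<r$. These intersections are in general still nonempty, so your candidate $\cL$-array genuinely fails condition (1), and no perturbation argument repairs $2$-inconsistency.

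The missing ideas are these. First, one must reduce to $|x|=1$ (Chernikov's one-variable theorem) so that genericity, stated for subsets of $K^{1+r}$, applies at all, and one must take $r$ minimal so that each $\pi(X_{i,j})$ and each finite path-intersection projects with full dimension $r$ (otherwise $\delta^r x$ could be eliminated by a rational function, lowering $r$); this full-dimensionality is what later keeps path-consistency alive. Second---and this is the heart of the argument---one should not aim for $2$-inconsistency of an $\cL$-formula but for $k$-inconsistency for some finite $k$: by Noetherianity of the Zariski topology and strong indiscernibility of the array, the Zariski closures of $\pi(X_{i,0}\cap\cdots\cap X_{i,n})$ stabilize at some stage $m$ to a closed set $Z_{i,m}$ of dimension $<r$ (genericity enters exactly here, as above), and every $(m+1)$-fold row-intersection projects into $Z_{i,m}$. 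Removing $Z_{i,m}$ from the fibers yields an $\cL$-formula whose instances along each row are $(m+1)$-inconsistent, while path-intersections remain nonempty because they have dimension $r$ and only a lower-dimensional set was deleted. One concludes via \cite[Proposition 5.7]{KKS14}, which converts $k$-TP$_2$ of a formula into TP$_2$ of a finite conjunction. None of these steps appears in your outline, and the stabilization-and-removal mechanism is not a refinement of your plan but a different one.
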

\begin{proof}
Assume that $\TdG$ has TP$_2$, as witnessed by an $\Ld$-formula $\phi(x,y)$ and an array of tuples $(a_{i,j})_{i,j<\omega}$. By \cite[Lemma 3.2]{Ch14}, we may also assume that  $|x| = 1$. By Fact~\ref{fact:transfer}\eqref{T2}, the formula $\phi$ is equivalent to a formula of the form $\psi(\nabla^{r}x,\nabla^{s}y)$ for natural numbers $r,s$ and an $\cL$-formula $\psi$. By replacing $a_{i,j}$ by $\nabla^{s}(a_{i,j})$ and augmenting $y$, we may assume that $s = 0$. For the rest of this proof, we fix $r$ and an $\cL$-formula $\psi(x_0,\ldots,x_r,y)$ (each $x_i$ unary) such that $\psi(\nabla^{r}x,y)$ has TP$_2$. We assume that $r$ is minimal with this property. Note that if $r = 0$, then $\psi$ is an $\cL$-formula, so $T$ has TP$_2$ and we are done. Thus, we assume for the remainder of the proof that $r>0$. We fix an array  $(a_{i,j})_{i,j<\omega}$ witnessing TP$_2$, and we may arrange that this array is \emph{strongly $\Ld$-indiscernible}, meaning that each row $(a_{i,j})_{j<\omega}$ is $\Ld$-indiscernible over the other rows and that the sequence of rows is $\Ld$-indiscernible; see~\cite[Lemma 5.6]{KKS14}.

For $i,j<\omega$, let 
\[
X_{i,j} \coloneqq \{(x_0,\ldots,x_r) \in K^{r+1}:K \models \psi(x_0,\ldots,x_r,a_{i,j})\},\qquad X_{i,j}^\nabla\coloneqq \{x \in K:\nabla^{r}x \in X_{i,j}\}.
\]
Then $X_{i,j}^\nabla \cap X_{i,j'}^\nabla= \emptyset$ for all $i$ and $j\neq j'$, but $\bigcap_i X_{i,f(i)}^\nabla$ is nonempty for all $f \colon \omega \to \omega$. Let $\pi\colon K^{r+1}\to K^r$ be the projection map onto the first $r$ coordinates.
\begin{claim}\label{claim1}
The projection $\pi(X_{i,j})$ has dimension $r$ for all $i,j$.
\end{claim}
\begin{proof}[Proof of Claim~\ref{claim1}]
Suppose not, so we find a polynomial $P(x_0,\ldots,x_{r-1},y)$ such that $P(x_0,\ldots,x_{r-1},a_{i,j})$ is not identically zero but vanishes on $\pi(X_{i,j})$. In particular, $P(\nabla^{r-1}x,a_{i,j}) = 0$ for all $x \in X_{i,j}^\nabla$. This yields a rational function $Q$ such that $\delta^r x = Q(\nabla^{r-1}x,\nabla^{r}a_{i,j})$ for all $x \in X_{i,j}^\nabla$. Thus, $\psi(\nabla^{r}x,a_{i,j})$ is equivalent to the formula 
\[
\psi(\nabla^{r-1}x,Q(\nabla^{r-1}x,\nabla^{r}a_{i,j}),a_{i,j})
\]
contradicting minimality of $r$.
\end{proof}
\begin{claim}\label{claim2}
Let $f\colon \omega\to \omega$ and let $n>0$. Then the set
\[
\pi(X_{0,f(0)}\cap X_{1,f(1)}\cap \cdots \cap X_{n-1,f(n-1)})
\]
has dimension $r$. 
\end{claim}
\begin{proof}[Proof of Claim~\ref{claim2}]
For each $i,j<\omega$, set
\[\textstyle
b_{i,j} \coloneqq (a_{ni+k,j+f(k)})_{k<n},\qquad 
\theta(x,b_{i,j}) \coloneqq  \bigwedge_{k<n}\psi(\nabla^r x,a_{ni+k,j+f(k)}).
\]
Then the formula $\theta$ has TP$_2$, as witnessed by $(b_{i,j})_{i,j<\omega}$.  The claim  follows by minimality of $r$ and the previous claim.
\end{proof}
For each $i$ and each $n>0$, we set
\[
F_{i,n}\coloneqq \pi(X_{i,0}\cap X_{i,1}\cap \cdots \cap X_{i,n}),
\]
and we let $Z_{i,n}$ denote the Zariski closure of $F_{i,n}$. Then $Z_{0,0} \supseteq Z_{0,1}\supseteq Z_{0,2}\supseteq\cdots$, so Notherianity of the Zariski topology gives $m$ with $Z_{0,m} = Z_{0,n}$ for $n \geq m$. Note that $Z_{0,m}$ is then $\cL(a_{0,0},\ldots,a_{0,m})$-definable, and we let $Z_{i,m}$ denote the corresponding $\cL(a_{i,0},\ldots,a_{i,m})$-definable Zariski closed set, so $Z_{i,m} = Z_{i,n}$ for $n\geq m$ by indiscernibility. We note that 
\begin{equation}\label{eq:intersection1}
\pi(X_{i,j_0}\cap X_{i,j_1} \cap \cdots \cap X_{i,j_m}) \subseteq Z_{i,m}
\end{equation}
for all $i$ and all $m<j_0<j_1<\cdots <j_m$. Indeed, let $Z$ be the Zariski closure of $\pi(X_{i,j_0}\cap X_{i,j_1} \cap \cdots \cap X_{i,j_n})$. If $Z \not\subseteq Z_{i,m}$, then $Z\cap Z_{i,m}$ is a proper Zariski closed subset of $Z$, and thus of $Z_{i,m}$ as well by indiscernibility of the sequence $(a_{i,j})_{j<\omega}$, but this intersection contains $Z_{i,j_m}$, contradicting our choice of $m$.

Note that $\pi(X_{i,1}\cap X_{i,2})$ has dimension $<r$ for each $i$; if not, then the axioms of $\TdG$ would give $a \in K$ with $\nabla^ra \in  X_{i,1}\cap X_{i,2}$, contradicting that $X_{i,1}^\nabla\cap X_{i,2}^{\nabla}= \emptyset$. Thus, $m>0$ and $Z_{i,m}$ has dimension $<r$ for each $i$. Now for each $i,j$, let $c_{i,j}\coloneqq (a_{i,0},\ldots,a_{i,m},a_{i,j+m+1})$, 
and set 
\[
Y_{i,j} \coloneqq\{(x_0,\ldots,x_r)\in X_{i,j+m+1}:(x_0,\ldots,x_{r-1})\not\in Z_{i,m}\}
\]
Then $Y_{i,j}$ is defined by some $\cL$-formula $\theta(x_0,\ldots,x_r,c_{i,j})$, and we claim that this formula has $(m+1)$-TP$_2$, meaning that 
\[
Y_{i,j_0}\cap \cdots \cap Y_{i,j_m} = \emptyset
\]
for all $i$ and all $j_0<\cdots<j_m$, but that the intersection $\bigcap_{i}Y_{i,f(i)}$ is nonempty for all $f\colon \omega\to \omega$. The first part follows by~\eqref{eq:intersection1}. For the second part, let  $f\colon \omega \to \omega$ and let $n$ be given. Then the projection 
\[\textstyle
\pi\big(\bigcap_{i<n}Y_{i,f(i)}\big) = \pi\big(\bigcap_{i<n}X_{i,f(i)}\big) \setminus \bigcup_{i<n}Z_{i,m}
\]
has dimension $r$ by Claim~\ref{claim2}. In particular, this intersection is nonempty, so $\bigcap_{i<\omega}Y_{i,f(i)}$ is nonempty as well. By~\cite[Proposition 5.7]{KKS14}, $T$ has TP$_2$, as witnessed by some finite conjunction of the formula $\theta$.
\end{proof}

This approach works for other tree properties that can be reduced to one variable and witnessed by strongly indiscernible parameters. For completeness, we show how to adapt our approach to the antichain tree property, introduced in~\cite[Definition 4.1]{AK24}.

A formula $\phi(x,y)$ has the antichain tree property (ATP) if there exists a tree-indexed set of parameters $(a_\eta)_{\eta \in 2^{<\omega}}$ such that 
\begin{enumerate}
\item The formula $\phi(x,a_\eta)\wedge \phi(x,a_\nu)$ is inconsistent whenever $\eta$ is a strict truncation of $\nu$.
\item The partial type $\{\phi(x,a_\eta) : \eta \in A\}$ is consistent for any antichain $A \subseteq 2^{<\omega}$.
\end{enumerate}
A theory has ATP if there is a formula that has ATP. We say that $T$ has \textbf{NATP} if it does not have ATP. Any ATP theory is  TP$_2$ and SOP$_1$; see~\cite[Propositions 4.4 and 4.6]{AK24}. 

For $\eta,\nu \in 2^{<\omega}$, we write $\eta \lhd\nu$ to indicate that $\eta$ is a strict truncation of $\nu$, and we write $\eta^\frown \nu$ to denote the concatenation of $\eta$ and $\nu$. Given also $A \subseteq 2^{<\omega}$, we put $\eta^\frown A \coloneqq \{\eta^\frown \nu:\nu\in A\}$.

\begin{theorem}\label{thm:NATP}
If $\TdG$ has ATP, then so does $T$.
\end{theorem}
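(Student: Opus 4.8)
The plan is to mirror the proof of Theorem~\ref{thm:NTP2} as closely as possible, adapting each step to the tree-indexed combinatorics of ATP. First I would assume $\TdG$ has ATP, witnessed by an $\Ld$-formula $\phi(x,y)$ and a tree $(a_\eta)_{\eta\in 2^{<\omega}}$. As in the NTP$_2$ case, I would reduce to $|x|=1$ (using that ATP, like TP$_2$, admits a one-variable reduction—this is the analog of Chernikov's lemma, and I would need to cite the corresponding result for ATP), then use Fact~\ref{fact:transfer}\eqref{T2} to replace $\phi$ by $\psi(\nabla^r x,\nabla^s y)$, and absorb the $s$ by replacing each $a_\eta$ with $\nabla^s(a_\eta)$ so that $\psi(\nabla^r x, y)$ has ATP with $r$ chosen minimal. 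I would arrange the tree to be \emph{strongly indiscernible} (indiscernible with respect to the appropriate tree structure, e.g.\ an $s$-indiscernible tree in the sense used for ATP), which is what powers the Zariski-closure stabilization argument.

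The core is then to transport the two claims. Writing $X_\eta\coloneqq\{(x_0,\ldots,x_r):K\models\psi(x_0,\ldots,x_r,a_\eta)\}$ and $X_\eta^\nabla\coloneqq\{x:\nabla^r x\in X_\eta\}$, the consistency-on-antichains condition says $\bigcap_{\eta\in A}X_\eta^\nabla\neq\emptyset$ for every antichain $A$, while the inconsistency condition says $X_\eta^\nabla\cap X_\nu^\nabla=\emptyset$ whenever $\eta\lhd\nu$. The analog of Claim~\ref{claim1} is that $\pi(X_\eta)$ has dimension $r$ for all $\eta$, proved exactly as before by minimality of $r$: if the projection dropped dimension, a polynomial relation would express $\delta^r x$ as a rational function of $\nabla^{r-1}x$ and parameters, reducing the number of derivatives. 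The analog of Claim~\ref{claim2}—that $\pi\big(\bigcap_{\eta\in A}X_\eta\big)$ has dimension $r$ for every finite antichain $A$—follows by grouping the antichain elements into a single formula $\theta(x,(a_\eta)_{\eta\in A})$ and observing that $\theta$ again has ATP (this uses that one can reindex an antichain-indexed conjunction by a new tree, the ATP counterpart of the TP$_2$ array-grouping in Claim~\ref{claim2}), so minimality of $r$ applies.

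Next I would run the Zariski-closure stabilization. Along any single branch, or more precisely along a downward chain $\eta_0\lhd\eta_1\lhd\cdots$, the Zariski closures $Z_n$ of $\pi\big(\bigcap_{k\leq n}X_{\eta_k}\big)$ form a descending chain, so Noetherianity gives a stabilization level $m$; strong indiscernibility makes this level uniform and produces, for each $\eta$, a Zariski-closed $Z_\eta$ of dimension $<r$ (the dimension is $<r$ because two nodes with $\eta\lhd\nu$ have $X_\eta^\nabla\cap X_\nu^\nabla=\emptyset$, so genericity of $\delta$ forces $\pi(X_\eta\cap X_\nu)$ to have dimension $<r$). I would then subtract off these lower-dimensional loci: set $Y_\eta\coloneqq\{(x_0,\ldots,x_r)\in X_{\eta'}:(x_0,\ldots,x_{r-1})\notin Z_\eta\}$ for a suitably shifted node $\eta'$, defined by an $\cL$-formula $\theta$ with parameters from the relevant finite collection of tree nodes. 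The inconsistency along truncations is inherited from the containment $\pi\big(\bigcap X_{\eta_k}\big)\subseteq Z_\eta$ past the stabilization level, while consistency on antichains survives because removing finitely many sets of dimension $<r$ from a projection of dimension $r$ leaves something nonempty, by Claim~\ref{claim2}. Finally I would invoke a compactness/finite-conjunction lemma for ATP to conclude that some finite conjunction of $\theta$ witnesses ATP in $T$.

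The main obstacle I anticipate is the tree bookkeeping in the stabilization step: unlike the linearly ordered rows of a TP$_2$ array, the ATP tree requires me to stabilize Zariski closures in a way compatible with the tree order $\lhd$ and with the antichain structure simultaneously, and to verify that the shifted nodes $\eta'$ preserve both the inconsistency-along-truncations and the consistency-on-antichains conditions after deleting the $Z_\eta$. Ensuring that the resulting $Y_\eta$ still form an ATP pattern—rather than merely a TP$_2$ pattern—is the delicate point, since the antichain-versus-chain dichotomy must be respected throughout. I expect that choosing the correct notion of strongly indiscernible tree and the correct reindexing in the Claim~\ref{claim2} analog is precisely what makes this work, and that once these are set up, the dimension-theoretic core transfers verbatim from the NTP$_2$ proof.
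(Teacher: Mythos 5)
Your skeleton matches the paper's proof: the one-variable and quantifier reductions for ATP (the paper cites \cite{AKL23} for these), minimality of $r$, the two dimension claims, Zariski-closure stabilization along a chain, subtraction of a lower-dimensional locus, and a finite-conjunction ($m$-ATP) lemma to finish. But there is a genuine gap at the one step where the ATP argument actually differs from the TP$_2$ one, namely the antichain analog of Claim~\ref{claim2}. You assert that ``one can reindex an antichain-indexed conjunction by a new tree,'' but this is exactly what has to be constructed, and it is not the same as the array-grouping used in Claim~\ref{claim2}. The new tree of grouped parameters $(b_\eta)_{\eta\in 2^{<\omega}}$ must satisfy \emph{both} ATP conditions simultaneously: translates of the finite antichain $A$ indexed by an antichain of new nodes must union to an antichain (for consistency), while translates indexed by comparable new nodes must contain a comparable pair (for inconsistency). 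The paper achieves this by fixing $\nu\in A$ and interleaving copies of $\nu$: for $\eta=\langle i_0,\ldots,i_{m-1}\rangle$ it sets $\nu^\eta\coloneqq\nu^\frown\langle i_0\rangle^\frown\nu^\frown\langle i_1\rangle^\frown\cdots^\frown\nu^\frown\langle i_{m-1}\rangle$ and $A_\eta\coloneqq(\nu^\eta)^\frown A$; then $\bigcup_{\eta\in B}A_\eta$ is an antichain if and only if $B$ is, and if $\eta\lhd\eta'$ then $(\nu^\eta)^\frown\nu\in A_\eta$ is a truncation of every element of $A_{\eta'}$. Without some such construction your Claim~2 analog is unproved; with it, the dimension-theoretic core does transfer as you describe.

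A second, more minor divergence is in the stabilization step, which you yourself flag as the delicate point: you propose a family $Z_\eta$ of Zariski-closed sets, one per node, together with ``suitably shifted nodes $\eta'$'' that you leave unspecified. The paper instead stabilizes along the single chain $\varnothing\lhd\langle 0\rangle\lhd\cdots\lhd m\langle 0\rangle$, obtains one set $Z_m$ of dimension $<r$, re-roots the whole tree at $m\langle 0\rangle$ (so $Y_\nu\subseteq X_{m\langle 0\rangle^\frown\nu}$), and subtracts the \emph{same} $Z_m$ from every $Y_\nu$. The containment $\pi(X_{\eta_0}\cap\cdots\cap X_{\eta_m})\subseteq Z_m$ for chains lying above $m\langle 0\rangle$ --- which uses that any two finite chains of equal length have the same $\Ld$-type, by strong indiscernibility --- then gives the $m$-ATP inconsistency condition in one line. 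Your per-node $Z_\eta$ version could likely be repaired, but you would need to specify which $Z_\eta$ witnesses the emptiness of a given chain intersection; the single-$Z_m$ re-rooting is what makes that verification immediate.
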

\begin{proof}
Assume that $\TdG$ has ATP. Using~\cite[Fact 2.5 and Theorem 3.17]{AKL23} and arguing as in the proof of Theorem~\ref{thm:NTP2}, we may assume that this is witnessed by a formula $\psi(\nabla^r x,y)$ where $x$ is unary, $\psi(x_0,\ldots,x_r,y)$ is an $\cL$-formula, and  $r$ is minimal, along with a strongly indiscernible tree-indexed set of parameters $(a_\eta)_{\eta \in 2^{<\omega}}$; see~\cite[Definition 2.4]{AKL23} for the precise definition of strong indiscernibility. As before, let 
\[
X_{\eta} \coloneqq \{(x_0,\ldots,x_r) \in K^{r+1}:K \models \psi(x_0,\ldots,x_r,a_{\eta})\},\qquad X_{\eta}^\nabla\coloneqq \{x \in K:\nabla^{r}x \in X_{\eta}\}
\]
for $\eta \in 2^{<\omega}$, and let $\pi\colon K^{r+1}\to K^r$ be the projection map onto the first $r$ coordinates. 
By the proof of Claim~\ref{claim1} above, $\pi(X_{\eta})$ has dimension $r$ for all $\eta$. Obtaining an analog of Claim~\ref{claim2} takes a bit more work:
\begin{claim*}
Let  $A\subseteq 2^{<\omega}$ be a finite nonempty antichain. Then the set $\pi\big(\bigcap_{\eta \in A}X_{\eta}\big)$ has dimension $r$. 
\end{claim*}
\begin{proof}
Fix $\nu \in A$. For $\eta =\langle i_0,i_1,\ldots,i_{m-1}\rangle\in 2^{<\omega}$, we set
\[
\nu^\eta \coloneqq \nu^\frown\langle i_0\rangle^\frown \nu^\frown\langle i_1\rangle^\frown \cdots^\frown \nu^\frown\langle i_{m-1}\rangle,\qquad A_\eta \coloneqq (\nu^\eta)^\frown A.
\]
Then $A_{\emptyset} = A$ and for $B \subseteq 2^{<\omega}$, the set $\bigcup_{\eta\in B}A_\eta$ is an antichain if and only if $B$ is an antichain. Set 
\[\textstyle
b_\eta \coloneqq (a_\mu)_{\mu \in A_\eta},\qquad 
\theta(x,b_\eta) = \bigwedge_{\mu \in A_\eta}\psi(\nabla^r x,a_\mu).
\]
Then the formula $\theta$ has ATP, as witnessed by $(b_\eta)_{\eta \in 2^{<\omega}}$. We conclude by minimality of $r$ as above.
\end{proof}

For $n \in \N$, we set $n\langle 0\rangle \coloneqq \langle 0,0,\ldots,0\rangle\in 2^{n}$ (so $0\langle 0\rangle =\emptyset$). We set $F_{n}\coloneqq \pi\big(\bigcap_{i\leq n}X_{i\langle0\rangle}\big)$, we let $Z_{n}$ denote the Zariski closure of $F_{n}$, and we take $m$ with $Z_m = Z_n$ for $n \geq m$. Arguing as in the proof of Theorem~\ref{thm:NTP2}, we have that $m>0$, that $\dim(Z_m)<r$, and that
\begin{equation}\label{eq:intersection2}
\pi(X_{\eta_0}\cap X_{\eta_1} \cap \cdots \cap X_{\eta_m}) \subseteq Z_m
\end{equation}
for all $m\langle 0\rangle\lhd \eta_0\lhd \eta_1\lhd\cdots \lhd\eta_m$ (this uses that $(b_\eta)_{\eta \in C}$ and $(b_\nu)_{\nu \in C'}$ have the same $\Ld$-type for any two finite chains $C,C'\subseteq 2^{<\omega}$ of the same length, as a consequence of strong indiscernibility). Now for each $\nu\in 2^{<\omega}$, let $c_{\nu}\coloneqq (a_{\emptyset},\ldots,a_{m\langle0\rangle},a_{m\langle0\rangle^\frown\nu})$ and set 
\[
Y_{\nu} \coloneqq\{(x_0,\ldots,x_r)\in X_{m\langle0\rangle^\frown\nu}:(x_0,\ldots,x_{r-1})\not\in Z_{m}\}.
\]
Then $Y_{\nu}$ is defined by an $\cL$-formula $\theta(x_0,\ldots,x_r,c_\nu)$. Arguing as in the proof of Theorem~\ref{thm:NTP2}, using~\eqref{eq:intersection2} and the Claim, we see that $\theta$ has $m$-ATP, meaning that $\bigcap_{\nu \in A}Y_\nu\neq \emptyset$ for any antichain $A$, but that $\bigcap_{\nu \in C}Y_\nu = \emptyset$ for any chain $C$ of length $m$. By~\cite[Lemma 3.20]{AKL23}, $T$ has ATP. 
\end{proof}

\subsection*{Acknowledgments}
The first author is a postdoctoral researcher of the Fonds de la Recherche Scientifique -- FNRS, and was supported in part by the National Science Foundation under Award No.\ DMS-2103240. Parts of this research were conducted while the first author was hosted by the Max Planck Institute for Mathematics, and he thanks the MPIM for its support and hospitality. We thank the referee for helpful feedback.

\providecommand{\bysame}{\leavevmode\hbox to3em{\hrulefill}\thinspace}
\providecommand{\MR}{\relax\ifhmode\unskip\space\fi MR }
\providecommand{\MRhref}[2]{%
  \href{http://www.ams.org/mathscinet-getitem?mr=#1}{#2}
}
\providecommand{\href}[2]{#2}

\end{document}